\def\re{\mathbb{R}}
\def\N{\mathbb{N}}
\def\({\left(}
\def\){\right)}
\def\pd{\partial}
\def\lap{\Delta}
\def\ep{\varepsilon}
\def\w{\omega}
\def\la{\lambda}
\def\F{\mathbf{F}}
\def\ol{\overline}
\def\intO{\int_{\Omega}}
\begin{document}

\title{Weighted Hardy's inequality in a limiting case and the perturbed Kolmogorov equation
}


\author{Megumi Sano         \and
        Futoshi Takahashi 
}

\institute{Megumi Sano \at
		Department of Mathematics, Graduate School of Science, Osaka City University, 3-3-138 Sugimoto, Sumiyoshi-ku, Osaka, 558-8585, Japan \\
		\email{megumisano0609@st.osaka-cu.ac.jp} \\ 
           \and
           Futoshi Takahashi \at
		Department of Mathematics, Graduate School of Science \& OCAMI, Osaka City University, 3-3-138 Sugimoto, Sumiyoshi-ku, Osaka, 558-8585, Japan \\
		\email{futoshi@sci.osaka-cu.ac.jp} \\ 
}


\date{\today}

\maketitle

\begin{abstract}

In this paper, we show a weighted Hardy inequality in a limiting case for functions in weighted Sobolev spaces with respect to an invariant measure.
We also prove that the constant in the left-hand side of the inequality is optimal.
As applications, we establish the existence and nonexistence of positive exponentially bounded weak solutions to a parabolic problem involving the Ornstein-Uhlenbeck operator
perturbed by a critical singular potential in two dimensional case, according to the size of the coefficient of the critical potential.
These results can be considered as counterparts in the limiting case of results which established in \cite{GGR(AA)} \cite{Hauer-Rhandi} in the non-critical cases,
and are also considered as extensions of a result in \cite{Cabre-Martel} to the Kolmogorov operator case perturbed by a critical singular potential.

\keywords{Weighted Hardy's inequality \and Limiting case \and Kolmogorov operator \and Singular potential}
\subclass{35A23 \and 26D10.}
\end{abstract}

%
%

\section{Introduction}

Let $\Omega$ be a domain in $\re^N$ with $0 \in \Omega$, $N \ge 1$, $1 < p < \infty$,
$A$ be a real $N \times N$-symmetric positive semi-definite matrix,
and
\begin{align}
\label{rho_A}
	d \mu_A = \rho_A (x) dx \quad \text{with} \quad \rho_A (x)=c \cdot \text{exp} \( -\frac{1}{p} (x^t Ax)^{\frac{p}{2}}\), \,\, x \in \Omega.
\end{align}
Here $c > 0$ is chosen so that $\int_{\re^N} d\mu_A = 1$.
More generally, we consider a Borel probability measure $d\mu = \rho(x) dx$ defined on $\Omega \subseteq \re^N$,
and let $W_{\mu, 0}^{1,p}(\Omega)$ denote a weighted Sobolev space which is a completion of $C_c^{\infty}(\Omega)$ with respect to the (semi-) norm $\| \nabla \cdot \|_{L^p(\Omega; d\mu)}$.

In this paper, we concern the limiting case $p = N \ge 2$.
Let $\Omega \subset \re^N$ be a bounded domain containing the origin, $R=\sup_{x \in \Omega} |x| < \infty$ and $a \ge 1$. 
Let $p = N \ge 2$ in \eqref{rho_A}.
In this paper, first we show the following weighted critical Hardy inequality
\begin{align}
\label{weighted L^N}
	\( \frac{N-1}{N} \)^N \intO \dfrac{|u|^N}{|x|^N (\log \frac{aR}{|x|})^N} d\mu_A 
	&\le \intO \left| \nabla u \cdot \frac{x}{|x|} \right|^N d\mu_A \notag \\
	&+ \( \frac{N-1}{N} \)^{N-1} \intO \dfrac{|u|^N (x^t Ax)^{\frac{N}{2}}}{|x|^N (\log \frac{aR}{|x|})^{N-1}} d\mu_A
\end{align}
holds for all $u$ in $W_{\mu_A, 0}^{1,N}(\Omega)$. 
We also prove that the constant $\( \frac{N-1}{N} \)^N$ in the left-hand side is optimal when $A$ is positive definite.
For a general weight function $\rho = \rho(x)$ satisfying some assumptions, 
we also prove a weighted critical Hardy inequality (with non-optimal constant) on two-dimensional domain.

The limiting case is left to be considered in \cite{GGR(AA)} and \cite{Hauer-Rhandi}. 
Actually in \cite{Hauer-Rhandi},
the authors prove a (non-critical) weighted Hardy inequality 
\begin{align}
\label{weighted L^p}
	\( \frac{|N-p|}{p} \)^p \intO \dfrac{|u|^p}{|x|^p} d\mu_A 
	&\le \intO \left| \nabla u \cdot \frac{x}{|x|} \right|^p d\mu_A \notag \\
	&+ \( \frac{|N-p|}{p} \)^{p-1} {\rm sgn} (N-p) \intO \dfrac{|u|^p (x^t Ax)^{\frac{p}{2}}}{|x|^p} d\mu_A
\end{align}
for functions $u$ in $W_{\mu_A, 0}^{1,p}(\Omega)$ when $1 < p < N$, $N \ge 2$,
and $u$ in $W_{\mu_A, 0}^{1,p}(\Omega \setminus \{ 0 \})$ when $p > N \ge 1$,
where $\mu_A$ is defined in \eqref{rho_A}.
The inequality \eqref{weighted L^p} was first established in \cite{GGR(AA)} when $p=2$ and $N \ge 3$.

Next, by using the optimality of the critical Hardy constant for $\rho_A$ with $p = N$, 
we study the existence and nonexistence of positive weak solutions of a parabolic equation driven by the symmetric Ornstein-Uhlenbeck operator perturbed by a singular potential in dimension $N=2$.
This part can be considered as an extension of \cite{GGR(AA)} to the two dimensional critical case.
Indeed, by using the weighted Hardy inequality \eqref{weighted L^p} for $\mu_A$ with $p=2$ and $N \ge 3$, 
and a result similar to the one in \cite{Cabre-Martel} which is applicable to the Kolmogorov operator
\[
	Lu = \Delta u + \frac{\nabla \rho}{\rho} \cdot \nabla u
\]
with respect to a positive Borel probability measure $d\mu = \rho dx$,
the authors in \cite{GGR(AA)} prove the following result.
 
\begin{theorem}(Goldstein-Goldstein-Rhandi \cite{GGR(AA)})
\label{Thm:GGR(AA)}
Assume $N \ge 3$ and $p=2$ in \eqref{rho_A}.
Let $A$ be a real $N \times N$-symmetric positive semi-definite matrix and $0 \le V(x) \le \frac{c}{|x|^2}$, $x \in \re^N$.
Then the following assertions hold:

(i) If $0 \le c \le \( \frac{N-2}{2} \)^2$, then there exists a weak solution $u \in C([0, \infty), L^2(\re^N; d\mu_A))$ of
\begin{align*}
	\begin{cases}
	\pd_t u(x,t) = \lap u(x,t) - Ax \cdot \nabla u(x,t) + V(x)u(x,t), &\quad t>0, x \in \re^N, \\
	u(x,0) = u_0(x), &\quad x \in \re^N,
	\end{cases}
\end{align*}
satisfying 
\begin{align*}
	\| u(t) \|_{L^2(\Omega; d\mu_A )} \le Me^{\omega t} \| u_0\|_{L^2(\Omega; d\mu_A)}, \quad t \ge 0
\end{align*}
for some constants $M \ge 1, \omega \in \re$ and for any $0 \le u_0 \in L^2(\Omega; d\mu_A)$.

(ii) If $c> \(\frac{N-2}{2} \)^2$, then for any $0 \le u_0 \in L^2(\Omega; d\mu_A) \setminus \{ 0\}$, 
there is no positive weak solution with $V(x)=\frac{c}{|x|^2}$ satisfying the above exponential boundedness.
\end{theorem}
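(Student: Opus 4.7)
The plan is to prove (i) by a form/semigroup argument based on the weighted Hardy inequality \eqref{weighted L^p} with $p=2$, and (ii) by contradicting the sharpness of the Hardy constant via a Cabr\'e--Martel style reduction to a spectral lower bound.

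For part (i), I would introduce on $W^{1,2}_{\mu_A,0}(\re^N)$ the symmetric bilinear form
\begin{align*}
a(u,v) := \intRN \nabla u \cdot \nabla v \, d\mu_A - \intRN V \, u v \, d\mu_A.
\end{align*}
The pointwise bounds $\left|\nabla u \cdot x/|x|\right| \le |\nabla u|$ and $x^t A x \le \|A\| \, |x|^2$, together with \eqref{weighted L^p}, give
\begin{align*}
\intRN \frac{u^2}{|x|^2} \, d\mu_A \le \frac{4}{(N-2)^2} \intRN |\nabla u|^2 \, d\mu_A + \frac{2\|A\|}{N-2} \intRN u^2 \, d\mu_A,
\end{align*}
and since $V \le c/|x|^2$ with $c \le ((N-2)/2)^2$, multiplying by $c$ yields
\begin{align*}
\intRN V u^2 \, d\mu_A \le \intRN |\nabla u|^2 \, d\mu_A + C_A \intRN u^2 \, d\mu_A.
\end{align*}
Thus $a$ is densely defined, symmetric, and bounded below; its closedness on $W^{1,2}_{\mu_A,0}(\re^N)$ follows from the Hardy bound together with Fatou. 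By the KLMN representation theorem, $a$ is associated with a self-adjoint operator that generates a strongly continuous semigroup $(T_t)_{t \ge 0}$ on $L^2(\re^N; d\mu_A)$ satisfying $\|T_t\| \le M e^{\w t}$. Setting $u(t) := T_t u_0$ produces the required weak solution, with $u(t) \ge 0$ coming either from the Beurling--Deny criteria applied to $a$, or from regularizing the potential by $V_\ep := \min(V, 1/\ep)$, applying the positivity-preserving Ornstein--Uhlenbeck semigroup to each approximating problem, and passing to the limit.

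For part (ii), I would argue by contradiction. Assume that for $V(x)=c/|x|^2$ with $c > ((N-2)/2)^2$ a positive exponentially $L^2$-bounded weak solution exists starting from some $0 \le u_0 \in L^2(d\mu_A)\setminus\{0\}$. Adapting \cite{Cabre-Martel} to the Kolmogorov setting (using the symmetry of $L = \lap - Ax \cdot \nabla$ with respect to $d\mu_A$), one shows that such a positive supersolution forces the generalized principal eigenvalue
\begin{align*}
\la_c := \inf_{0 \ne \phi \in C_c^\infty(\re^N \setminus \{0\})} \frac{\intRN |\nabla \phi|^2 d\mu_A - c \intRN \phi^2 |x|^{-2} d\mu_A}{\intRN \phi^2 d\mu_A}
\end{align*}
to be finite (essentially bounded below by $-\w$). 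But the optimality of $((N-2)/2)^2$ in \eqref{weighted L^p}, proved earlier in the paper, produces a sequence of test functions concentrating at $0$ for which the above Rayleigh quotient tends to $-\infty$ whenever $c > ((N-2)/2)^2$, yielding the desired contradiction.

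The main obstacle is part (ii). The Cabr\'e--Martel argument is classically stated for the Laplacian with Dirichlet data on a bounded domain, so one has to re-examine the construction of positive supersolutions and the passage from an exponentially bounded parabolic solution to a spectral lower bound in the presence of the drift $-Ax \cdot \nabla$ and of the non-translation-invariant weight $\rho_A$. Since $\rho_A$ is smooth and strictly positive on any compact neighborhood of the origin, the local Harnack and approximation arguments near the singularity should reduce to the classical ones after absorbing $\rho_A(0)$ into constants; the real work is to make this reduction rigorous uniformly in the approximation parameter $\ep$ and to control the behavior at infinity using the exponential decay of $\rho_A$.
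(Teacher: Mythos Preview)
The paper does not prove Theorem~\ref{Thm:GGR(AA)}: it is quoted verbatim as a result of Goldstein--Goldstein--Rhandi \cite{GGR(AA)}, stated in the introduction to motivate the paper's own contributions in the limiting case $p=N$. There is therefore no ``paper's own proof'' of this statement to compare your proposal against.

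That said, the paper does prove a closely analogous statement, Theorem~\ref{Thm:bdd}, and explicitly follows \cite{Cabre-Martel} and \cite{GGR(AA)} in doing so, so it is reasonable to compare your proposal with that. For part~(ii) your outline matches the paper's argument: assume a positive exponentially bounded solution exists, run the Cabr\'e--Martel trick (multiply the truncated equation by $\phi^2/u_n$, integrate, and use Jensen/log-concavity together with the exponential bound) to deduce $\lambda_1(L+V)>-\infty$, and contradict this via test functions concentrating at the singularity. For part~(i) your route is genuinely different: you use form methods and the KLMN theorem to build the semigroup in one stroke, whereas the paper truncates $V_n=\min(V,n)$ and $u_{0,n}=\min(u_0,n)$, solves each bounded-coefficient problem classically, gets a uniform $L^2$ bound from $\lambda_1(L+V)>-\infty$, and passes to the limit by monotone convergence. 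The truncation route is more hands-on and delivers positivity directly via the maximum principle for each approximant, so no Beurling--Deny check is needed; your form-theoretic argument is cleaner abstractly but relies on heavier machinery and still leaves the positivity step to be verified.
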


Note that if $\mu = \mu_A$ with $p=2$, then the Kolmogorov operator $L$ is of the form
\[
	L_A u = \Delta u - Ax \cdot \nabla u,
\]
which is known as the symmetric Ornstein-Uhlenbeck operator.
This type of operator arises from many areas of mathematics, such as probability, mathematical physics, and mathematical finance.
Later, Theorem \ref{Thm:GGR(AA)} was generalized by Hauer-Rhandi \cite{Hauer-Rhandi} to the case $p \ne 2$ and $\Omega = (0, \infty) \subset \re$ $(N=1)$
and Goldstein-Hauer-Rhandi \cite{GHR} for the general case. See also \cite{GGR(DCDS)}.
For the classical case $L = \Delta$, the study of existence and nonexistence of positive solutions to the heat equation with a singular potential was initiated by
Baras-Goldstein \cite{Baras-Goldstein} and now enjoys various extensions, see \cite{AGG}, \cite{Goldstein-Kombe}, \cite{Goldstein-Kombe(Positivity)}, \cite{Goldstein-Zhang(JFA)}, \cite{Goldstein-Zhang(TAMS)}, 
and the references therein.

In this paper, we prove the corresponding result for the parabolic problem driven by the symmetric Ornstein-Uhlenbeck operator perturbed by a singular critical potential
of the form $V(x) = \frac{c}{|x|^2(\log \frac{aR}{|x|})^2}$ on two dimensional bounded domains with the Dirichlet boundary conditions.

At the end of this section, we fix several notations:
Let $B^k(R)$ be the $k$-dimensional ball centered at the origin with radius $R$ in $\re^k$.
$B^N(R)$ will be denoted by $B(R)$.
$|B^k(R)|$ denotes the $k$-dimensional volume of $B^k(R)$.
$\w_N$ denotes the area of the unit sphere in $\re^N$.
$(X)_N$ be the $N$-th component of the vector $X \in \re^N$.

%
%
\section{A weighted critical Hardy inequality : $p=N$}

In this section, we prove several weighted Hardy type inequalities for functions in the critical weighted Sobolev space.
Next theorem is a generalization of a result in \cite{Hauer-Rhandi} to the critical case.
Critical Hardy type inequalities with sharp ($a = 1$) and non-sharp ($a > 1$) weight when $d\mu$ is the Lebesgue measure
have been studied by many authors recently,
see for example, \cite{AS}, \cite{Ioku-Ishiwata}, \cite{Sano}, \cite{Sano-TF(EJDE)}, \cite{Sano-TF(CVPDE)}, \cite{TF} and the references therein,
and the sharp critical Hardy inequality was proved originally by Leray in his thesis in 1933 \cite{Leray} when $N=2$.

%
%

\begin{theorem}
\label{Thm:WH}
Let $\Omega$ be a bounded domain in $\re^N$, $N \ge 2$, $R= \sup_{x \in \Omega} |x|$, $a \ge 1$, and $A$ be a real $N \times N$-symmetric positive semi-definite matrix. 
Let $\mu_A$ be defined in \eqref{rho_A} with $p = N$.
Then the inequality
\begin{align}
\label{WH}
	\( \frac{N-1}{N} \)^N \intO \dfrac{|u|^N}{|x|^N (\log \frac{aR}{|x|})^N} d\mu_A &\le \intO \left| \nabla u \cdot \frac{x}{|x|} \right|^N d\mu_A \notag \\
	&+ \( \frac{N-1}{N} \)^{N-1} \intO \dfrac{|u|^N (x^t Ax)^{\frac{N}{2}}}{|x|^N (\log \frac{aR}{|x|})^{N-1}} d\mu_A
\end{align}
holds for all $u \in W_{\mu_A, 0}^{1,N}(\Omega)$. 
Moreover, if $A$ is positive definite and $0 \in \Omega$, then the constant $(\frac{N-1}{N})^N$ in the left-hand side of \eqref{WH} is optimal.
\end{theorem}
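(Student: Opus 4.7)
The strategy is a vector-field (divergence) identity, combined with Hölder and a scaled Young inequality, followed by a direct test-function construction for sharpness.

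I would start by introducing the radial vector field
\[
\F(x) = \frac{\rho_A(x)\, x}{|x|^N (\log(aR/|x|))^{N-1}}, \qquad x\in\Omega\setminus\{0\},
\]
and computing its divergence. Using the elementary formula $\operatorname{div}(f(|x|)x/|x|)=f'(|x|)+(N-1)f(|x|)/|x|$ on the radial factor, together with the product rule and the identity $x\cdot\nabla\rho_A=-\rho_A(x^tAx)^{N/2}$ (immediate from $\nabla\rho_A = -\rho_A(x^tAx)^{N/2-1}Ax$), a direct calculation gives
\[
\operatorname{div}\F = \frac{(N-1)\rho_A}{|x|^N(\log(aR/|x|))^N} - \frac{\rho_A(x^tAx)^{N/2}}{|x|^N(\log(aR/|x|))^{N-1}}.
\]
This identity is tailored so that both integrands in \eqref{WH} arise simultaneously; the choice of $\F$ is the one step that has to be guessed, and is the main conceptual difficulty.

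For $u\in C_c^\infty(\Omega\setminus\{0\})$ I would then multiply by $|u|^N$ and integrate by parts, which eliminates the singularity at the origin thanks to the support restriction. Writing $X,Y,Z$ for the three integrals in \eqref{WH}, Hölder's inequality with exponents $N$ and $N/(N-1)$ produces $(N-1)Y \leq N X^{1/N}Y^{(N-1)/N}+Z$. A scaled Young inequality $N X^{1/N}Y^{(N-1)/N}\leq \mu^{-(N-1)}X+(N-1)\mu Y$ (valid for every $\mu>0$), followed by multiplication by $\mu^{N-1}$, yields
\[
(N-1)\mu^{N-1}(1-\mu)Y \leq X + \mu^{N-1}Z,
\]
and the left-hand coefficient is maximized at $\mu=(N-1)/N$, producing exactly $((N-1)/N)^N$ on the left and $((N-1)/N)^{N-1}$ in front of $Z$. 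Extension from $C_c^\infty(\Omega\setminus\{0\})$ to $W^{1,N}_{\mu_A,0}(\Omega)$ follows from density, using that points have zero $N$-capacity in $\re^N$ (so singletons are removable from the test-function class), combined with Fatou's lemma for the positive weighted integrands.

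For sharpness when $A$ is positive definite and $0\in\Omega$, I would plug in the Leray-type family $u_\ep(x)=\phi(x)(\log(aR/|x|))^{(N-1)/N-\ep}$, with $\phi\in C_c^\infty(\Omega)$ a radial cutoff equal to $1$ on a small ball around $0$. Since $\rho_A(0)>0$, the measure $d\mu_A$ is locally comparable to Lebesgue, so the substitution $t=\log(aR/|x|)$ shows that $X_\ep$ and $Y_\ep$ each diverge like $1/(N\ep)$ with ratio $X_\ep/Y_\ep\to((N-1)/N)^N$ as $\ep\to 0^+$. The correction $Z_\ep$ stays bounded, because $(x^tAx)^{N/2}/|x|^N$ is bounded when $A$ is positive definite while the logarithmic weight enters only to the power $-(N-1)$ instead of $-N$; hence $Z_\ep/Y_\ep\to 0$ and the full right-to-left ratio of \eqref{WH} converges to $((N-1)/N)^N$, proving optimality. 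Beyond the initial guess of $\F$, the main technical care goes into the capacity-based density step and into verifying that $Z_\ep$ is of strictly lower order than $Y_\ep$ along the extremal sequence.
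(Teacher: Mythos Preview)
Your proposal is correct and follows essentially the same approach as the paper: the same radial vector field, the same divergence computation, integration by parts followed by Young's inequality, and the same Leray-type test functions for sharpness. The only cosmetic differences are that the paper builds a scaling parameter $\lambda$ into $\F$ and applies Young's inequality once (your $\mu^{N-1}$ is their $\lambda$), and that the paper works directly with $u\in C_c^1(\Omega)$ rather than excising the origin and invoking $N$-capacity, noting instead that both $\frac{1}{|x|^N(\log(aR/|x|))^N}$ and $\frac{(x^tAx)^{N/2}}{|x|^N(\log(aR/|x|))^{N-1}}$ are locally integrable so the integration by parts is justified without a density step through $C_c^\infty(\Omega\setminus\{0\})$.
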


\begin{proof} {\it \,of Theorem \ref{Thm:WH}}.
By density, it is enough to show that the inequality (\ref{WH}) holds for all $u \in C_c^1 (\Omega)$.
We fix $\la \ge 0$ and $\beta >0$, which will be chosen later. 
Set
\begin{align*}
	\F (x)=\la \rho_A (x) \frac{x}{|x|^N (\log \frac{aR}{|x|})^\beta}  \quad \text{for} \,\, x \in \Omega \setminus \{ 0\}.
\end{align*}
Here $\rho_A$ is defined in (\ref{rho_A}) with $p=N$. Then we easily check that 
\begin{align*}
	{\rm div}  \F (x) = \la \rho_A (x) \left[ \frac{\beta}{|x|^N (\log \frac{aR}{|x|})^{\beta +1}} - \frac{(x^t Ax)^{\frac{N}{2}}}{|x|^N (\log \frac{aR}{|x|})^{\beta}}  \right] 
	\quad \text{for} \,\, x \in \Omega \setminus \{ 0\}.
\end{align*}
By applying integration by parts and Young's inequality, we have
\begin{align*}
	&\intO |u|^N \la \left[ \frac{\beta}{|x|^N (\log \frac{aR}{|x|})^{\beta +1}} - \frac{(x^t Ax)^{\frac{N}{2}}}{|x|^N (\log \frac{aR}{|x|})^{\beta}}  \right] d\mu_A \\
	&= -N \intO |u|^{N-2}u ( \nabla u \cdot \F ) dx \\
	&=-N \la \intO \frac{|u|^{N-2}u}{|x|^{N-1} (\log \frac{aR}{|x|})^{\beta}} \( \nabla u \cdot \frac{x}{|x|} \) d\mu_A \\
	&\le \intO \left| \nabla u \cdot \frac{x}{|x|} \right|^N d\mu_A + (N-1) \la^{\frac{N}{N-1}} \intO \dfrac{|u|^N}{|x|^N (\log \frac{aR}{|x|})^{\frac{\beta N}{N-1}}} d\mu_A.
\end{align*}
Here note that the left-hand side of the first equality is well-defined because the following properties hold true by the assumption of $A$:
\begin{align*}
	\frac{1}{|x|^N (\log \frac{aR}{|x|})^{\beta +1}} \in L^1_{\text{loc}}(\Omega ) 
	\quad \text{and} \quad \frac{(x^t Ax)^{\frac{N}{2}}}{|x|^N (\log \frac{aR}{|x|})^{\beta}} \le \frac{|A|^{\frac{N}{2}}}{(\log \frac{aR}{|x|})^{\beta}} \in L^1_{\text{loc}}(\Omega ).
\end{align*}
Now, if we choose $\beta =N-1$, then we obtain
\begin{align*}
	(\la \beta -(N-1) \la^{\frac{N}{N-1}} ) \intO \dfrac{|u|^N}{|x|^N (\log \frac{aR}{|x|})^N} d\mu_A 
	\le \intO \left| \nabla u \cdot \frac{x}{|x|} \right|^N d\mu_A + \la \intO \dfrac{|u|^N (x^t Ax)^{\frac{N}{2}}}{|x|^N (\log \frac{aR}{|x|})^{N-1}} d\mu_A.
\end{align*}
Furthermore, if we choose $\la =\( \frac{N-1}{N} \)^{N-1}$ which attains the maximum of the function $\la \mapsto (\la \beta -(N-1) \la^{\frac{N}{N-1}} )$ on the half line $[0, \infty )$, 
then we obtain the inequality (\ref{WH}) for all $u \in C_c^1 (\Omega)$. 
Therefore the inequality (\ref{WH}) also holds for all $u \in W_{\mu_A, 0}^{1,N}(\Omega)$.

%
%
Next we shall show the optimality of the constant $(\frac{N-1}{N})^N$ in (\ref{WH}) if $A$ is positive definite and $0 \in \Omega$. 
To do so, we fix $\la > (\frac{N-1}{N})^N$ and take any $\tilde{\la} \ge 0$. 
Set 
\begin{align*}
	&E(u) = \intO \left| \nabla u \cdot \frac{x}{|x|} \right|^N d\mu_A + \tilde{\la} \intO \dfrac{|u|^N (x^t Ax)^{\frac{N}{2}}}{|x|^N (\log \frac{aR}{|x|})^{N-1}} d\mu_A, \\
	&F_\la (u) = \la \intO \dfrac{|u|^N}{|x|^N (\log \frac{aR}{|x|})^N} d\mu_A
\end{align*}
for $u \in W_{\mu_A, 0}^{1,N}(\Omega) \setminus \{ 0\}$.
Now we consider the test function $\varphi_{\gamma, \ep} \in W_{\mu_A, 0}^{1,N}(B(R))$ given by
\begin{align*}
	\varphi_{\gamma, \ep} (x) = \( \log \frac{aR}{|x|} \)^\gamma \xi_\ep (x),
\end{align*}
where $\gamma < \frac{N-1}{N}$, $\ep>0$ is chosen so that $B(\ep) \subset \Omega$,
and $\xi_\ep \in C_c^{\infty}(B(\ep))$ is a cut-off function with $0 \le \xi_\ep \le 1, \xi_\ep \equiv 1$ on $B(\frac{\ep}{2})$, $|\nabla \xi_\ep | \le B \ep^{-1}$ on $B(\ep)$ for some $B>0$. 
Note that there exist $\alpha_1, \alpha_2 >0$ such that $\alpha_1 |x|^2 \le x^t Ax \le \alpha_2 |x|^2$ for all $x \in \Omega$ because $A$ is positive definite.
Then we have
\begin{align}
\label{deno1}
	F(\varphi_{\gamma, \ep})
	&\ge \la \int_{B(\frac{\ep}{2})} \dfrac{|\varphi_{\gamma, \ep}|^N}{|x|^N (\log \frac{aR}{|x|})^N} \rho_A (x) dx \nonumber \\
	&\ge \la \int_{B(\frac{\ep}{2})} \( \log \frac{aR}{|x|} \)^{\gamma N-N} c\, \text{exp}\( -\frac{\alpha_2^{\frac{N}{2}}}{N} |x|^N \) \frac{dx}{|x|^N} \nonumber \\
	&\ge \la c \, \text{exp}\( -\frac{\alpha_2^{\frac{N}{2}}}{N} \(\frac{\ep}{2} \)^N \) \w_N \int_0^{\frac{\ep}{2}} \( \log \frac{aR}{r} \)^{\gamma N-N} \frac{dr}{r} \nonumber \\
	&=\la c \, \text{exp}\( -\frac{\alpha_2^{\frac{N}{2}}}{N} \( \frac{\ep}{2} \)^N \) \frac{\w_N}{N} \( \log \frac{2aR}{\ep} \)^{N(\gamma -\frac{N-1}{N})} \( \frac{N-1}{N}-\gamma \)^{-1}.
\end{align}
And also we obtain
\begin{align}
\label{derivative1}
	\intO \left| \nabla \varphi_{\gamma, \ep} \cdot \frac{x}{|x|} \right|^N d\mu_A 
	&\le \gamma^N \int_{B(\frac{\ep}{2})} \( \log \frac{aR}{|x|} \)^{\gamma N-N} \rho_A (x) \frac{dx}{|x|^N} + c \int_{B(\ep ) \setminus B(\frac{\ep}{2})} | \nabla \varphi_{\gamma, \ep} |^N dx  \nonumber \\
	&\le \gamma^{N} c \, \frac{\w_N}{N} \( \log \frac{2aR}{\ep} \)^{N(\gamma -\frac{N-1}{N})} \( \frac{N-1}{N}-\gamma \)^{-1} + R(\gamma, \ep),
\end{align}
where $R(\gamma, \ep) = c \int_{B(\ep ) \setminus B(\frac{\ep}{2})} | \nabla \varphi_{\gamma, \ep} |^N dx$.
Note that the remainder term $R(\gamma, \ep)$ can be estimated as follows:
\begin{align*}
	&R(\gamma, \ep) \\
	&\le c\, 2^{N-1} \int_{B(\ep ) \setminus B(\frac{\ep}{2})} \left| \xi_{\ep} \nabla \( \( \log \frac{aR}{|x|} \)^{\gamma} \) \right|^N + \left| \( \log \frac{aR}{|x|} \)^{\gamma} \nabla \xi_{\ep} \right|^N dx \\
	&\le c 2^{N-1} \gamma^N \int_{B(\ep) \setminus B(\frac{\ep}{2})} \( \log \frac{aR}{|x|} \)^{(\gamma -1)N} \frac{dx}{|x|^N} +c 2^{N-1} 
	(B \ep^{-1})^N \int_{B(\ep) \setminus B(\frac{\ep}{2})} \( \log \frac{aR}{|x|} \)^{\gamma N} dx \\
	&\le c 2^{N-1} \w_N  \gamma^N ((\gamma -1)N +1)^{-1} \left[ \( \log \frac{2aR}{\ep} \)^{(\gamma -1)N+1} - \( \log \frac{aR}{\ep} \)^{(\gamma -1)N+1} \right] \\
	&\qquad + c 2^{N-1} \w_N  B^N (\gamma N +1)^{-1} \left[ \( \log \frac{2aR}{\ep} \)^{\gamma N+1} - \( \log \frac{aR}{\ep} \)^{\gamma N+1} \right].
\end{align*}
By applying the mean value theorem for the function $x \mapsto x^p$ for $p=(\gamma -1)N+1$ or $p=\gamma N +1$, there exist positive constants $b, d$ satisfying $\log \frac{aR}{\ep} \le b$ and $d$ $\le \log \frac{2aR}{\ep}$ such that
\begin{align}
\label{remainder}
	|R(\gamma, \ep)| &\le c 2^{N-1} \w_N \log 2 \left[ \gamma^N b^{N(\gamma -\frac{N-1}{N}) -1} + B^N d^{\gamma N} \right] \nonumber \\
	&\le c 2^{N-1} \w_N \log 2 \left[ \gamma^N \( \log \frac{aR}{\ep} \)^{N(\gamma -\frac{N-1}{N}) -1} + B^N \( \log \frac{2aR}{\ep} \)^{\gamma N} \right] \notag \\
	&= O(1) \quad \text{as} \quad \gamma \nearrow \frac{N-1}{N}.
\end{align}
In the same way as above, we also obtain
\begin{align}
\label{AA}
	\intO \dfrac{|\varphi_{\gamma, \ep}|^N (x^t Ax)^{\frac{N}{2}}}{|x|^N (\log \frac{aR}{|x|})^{N-1}} d\mu_A
	&\le |A|^{\frac{N}{2}} c \, \w_N \ep^N (\gamma N-N+2)^{-1} \( \log \frac{aR}{\ep} \)^{\gamma N-N+2} \notag \\
	&= O(1) \quad \text{as} \quad \gamma \nearrow \frac{N-1}{N}.
\end{align}
From (\ref{derivative1}), (\ref{remainder}), and (\ref{AA}), we have
\begin{align}
\label{nume1}
	E(\varphi_{\gamma, \ep}) &\le \gamma^N c \frac{\w_N}{N} \( \log \frac{2aR}{\ep} \)^{N(\gamma -\frac{N-1}{N})} \( \frac{N-1}{N}-\gamma \)^{-1} \notag \\
	&\qquad + o \( \( \frac{N-1}{N}-\gamma \)^{-1} \) \quad \text{as} \,\, \gamma \nearrow \frac{N-1}{N}.
\end{align}
From the estimates (\ref{deno1}) and (\ref{nume1}), 
if we have chosen $\ep >0$ independent of $\gamma$ so small such that $\la \text{exp}\( -\frac{\alpha_2^{\frac{N}{2}}}{N} \(\frac{\ep}{2}\)^N \) > (\frac{N-1}{N})^N > \gamma^N$, 
which is possible since $\la > (\frac{N-1}{N})^N$,
then we observe that
\begin{align*}
	E(\varphi_{\gamma, \ep}) < F_\la (\varphi_{\gamma, \ep})
\end{align*}
for $\gamma$ close to $\frac{N-1}{N}$.
Therefore the inequality (\ref{WH}) never holds if the constant on the left-hand side of (\ref{WH}) is bigger than $(\frac{N-1}{N})^N$.
Hence the constant $(\frac{N-1}{N})^N$ in (\ref{WH}) is optimal.
\end{proof}
\qed

%
%
\begin{remark}
\label{rem:WH}
Let $\tilde{\la} \ge 0$ and $\la > (\frac{N-1}{N})^N$.
Then by using the test function $\varphi_{\gamma, \ep}$, we observe that
\begin{align*}
	\inf_{0 \neq u \in W_{\mu_A, 0}^{1,N}(\Omega )} \dfrac{\intO | \nabla u |^N d\mu_A + \tilde{\la} \intO \frac{|u|^N (x^t Ax)^{\frac{N}{2}}}{|x|^N (\log \frac{aR}{|x|})^{N-1}} d\mu_A 
	- \la \intO \frac{|u|^N}{|x|^N (\log \frac{aR}{|x|})^N} d\mu_A}{\intO |u|^N d\mu_A} = - \infty
\end{align*}
holds true.
\end{remark}

\vspace{1em}

%
%
In the case $N=2$, we can obtain the critical Hardy type inequality (with non-optimal constant) for the general weight function $\rho = \rho(x)$ 
satisfying the following conditions: 
%
%
\begin{align*}
(H1) \,\, &0 < \rho \in H^2(\Omega ) \cap C^{1+\alpha}(\Omega ), \\
(H2) \,\, &\text{for any}\, \ep >0 \, \text{there is} \, C_\ep \in \re \, \text{such that} \left| 
\frac{\nabla \rho}{\rho} \right|^2 - \frac{\lap \rho}{\rho} \le \ep \left| 
\frac{\nabla \rho}{\rho} \right|^2 + C_\ep.
\end{align*}

\begin{proposition}
\label{Prop:WH general}
Let $\Omega$ be a bounded domain in $\re^2$ containing the origin, $R= \sup_{x \in \Omega} |x|$ and $a \ge 1$. 
Let $d\mu = \rho(x) dx$ and assume $(H1)$ and $(H2)$ are satisfied.
Then for any $\delta >0$, there exists $C_\delta >0$ such that the inequality
\begin{align}\label{WH general}
	\frac{1}{4} \intO \dfrac{|\phi |^2}{|x|^2 (\log \frac{aR}{|x|})^2} d\mu \le (4+\delta ) \intO |\nabla \phi |^2 d\mu + C_\delta \intO |\phi |^2 d\mu
\end{align}
holds for all $\phi \in W^{1,2}_{\mu, 0}(\Omega)$.
\end{proposition}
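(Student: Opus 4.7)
My plan is to reduce Proposition~\ref{Prop:WH general} to the classical sharp two-dimensional critical Hardy inequality of Leray,
\[
\frac{1}{4}\int_\Omega \frac{v^2}{|x|^2 \left(\log \frac{aR}{|x|}\right)^2}\,dx \le \int_\Omega |\nabla v|^2 \,dx, \qquad v \in H_0^1(\Omega),
\]
via the ``ground state'' substitution $v = \phi\sqrt{\rho}$. By (H1) the function $\sqrt{\rho}$ is strictly positive and of class $C^{1+\alpha}(\Omega)$, so for $\phi \in C_c^\infty(\Omega)$ the product $v = \phi\sqrt{\rho}$ lies in $H^1_0(\Omega)$, and a standard density argument extends the resulting inequality from $C_c^\infty(\Omega)$ to all of $W^{1,2}_{\mu,0}(\Omega)$.

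The key computation is to expand $|\nabla v|^2 = \rho|\nabla\phi|^2 + \phi\,\nabla\phi \cdot \nabla\rho + \frac{\phi^2|\nabla\rho|^2}{4\rho}$ and integrate the cross term by parts: the identity $\int_\Omega \phi\,\nabla\phi\cdot\nabla\rho \, dx = -\frac{1}{2}\int_\Omega \phi^2\,\Delta\rho \, dx$ is legitimate because $\rho \in H^2(\Omega)$ by (H1) and $\phi$ has compact support in $\Omega$. This produces the identity
\[
\int_\Omega |\nabla v|^2 \, dx = \int_\Omega |\nabla\phi|^2 \, d\mu + \int_\Omega \phi^2 \left( \frac{1}{4}\left|\frac{\nabla\rho}{\rho}\right|^2 - \frac{1}{2}\frac{\Delta\rho}{\rho} \right) d\mu.
\]

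Hypothesis (H2), multiplied by $1/2$ and rearranged, yields the pointwise bound
\[
\frac{1}{4}\left|\frac{\nabla\rho}{\rho}\right|^2 - \frac{1}{2}\frac{\Delta\rho}{\rho} \le \left(\frac{\epsilon}{2}-\frac{1}{4}\right)\left|\frac{\nabla\rho}{\rho}\right|^2 + \frac{C_\epsilon}{2},
\]
whose right-hand side is $\le C_\epsilon/2$ as soon as $\epsilon \le 1/2$. Substituting into the identity above and then invoking Leray's inequality applied to $v$ gives
\[
\frac{1}{4}\int_\Omega \frac{\phi^2}{|x|^2\left(\log\frac{aR}{|x|}\right)^2}\, d\mu \le \int_\Omega |\nabla\phi|^2 \, d\mu + \frac{C_{1/2}}{2}\int_\Omega \phi^2 \, d\mu,
\]
which is in fact sharper than \eqref{WH general} (the constant $1$ in front of the Dirichlet integral is smaller than $4+\delta$), and setting $C_\delta := C_{1/2}/2$ finishes the proof.

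The only step that could cause genuine trouble is the integration by parts involving $\Delta\rho$, since $\Delta\rho$ need not be bounded or have a definite sign; the $H^2$-regularity supplied by (H1) is precisely what makes this step work. After that, (H2) enters only as a pointwise algebraic bound on the perturbation integrand, and the density extension from $C_c^\infty(\Omega)$ to $W^{1,2}_{\mu,0}(\Omega)$ is routine, since each side of the final inequality is continuous in the $W^{1,2}_{\mu,0}$ norm in view of the inequality itself.
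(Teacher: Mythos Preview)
Your proof is correct and in fact yields a sharper inequality than the one stated, with constant $1$ in place of $4+\delta$ in front of the Dirichlet energy. The paper's argument reaches the same intermediate point as you do, but written differently: it derives
\[
\frac{1}{4}\int_\Omega \frac{|\phi|^2}{|x|^2(\log\frac{aR}{|x|})^2}\,d\mu \;\le\; \Bigl\|\nabla\phi + \tfrac{1}{2}\phi\,\tfrac{\nabla\rho}{\rho}\Bigr\|^2_{L^2(\Omega;d\mu)}
\]
directly via the divergence identity ${\rm div}\bigl(x/(|x|^2\log\tfrac{aR}{|x|})\bigr)=|x|^{-2}(\log\tfrac{aR}{|x|})^{-2}$, and since $\nabla\phi+\tfrac{1}{2}\phi\,\tfrac{\nabla\rho}{\rho}=\rho^{-1/2}\nabla(\phi\sqrt{\rho})$ this is exactly Leray's inequality applied to $v=\phi\sqrt{\rho}$. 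The genuine divergence comes in the second step: the paper bounds $\|\nabla\phi+\tfrac{1}{2}\phi\,\tfrac{\nabla\rho}{\rho}\|^2$ by quoting an estimate from \cite{GGR(AA)} based on Young's inequality with parameters $\eta$ and $\varepsilon$, and optimising these produces the coefficient $4+\delta$. You instead expand the same quantity exactly, integrate the cross term $\int\phi\,\nabla\phi\cdot\nabla\rho\,dx$ by parts (legitimate precisely because $\rho\in H^2(\Omega)$ from (H1)), and then invoke (H2) once with $\varepsilon=1/2$; this avoids the loss and gives the clean constant $1$. Your route is more elementary, uses the hypotheses in exactly the same places, and shows that the proposition as stated is not sharp.
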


\begin{proof} {\it \,of Proposition \ref{Prop:WH general}}.
The proof goes along the same way as in \cite{GGR(AA)}.
Again we may assume $\phi \in C_c^{\infty}(\Omega)$.
Since $\text{div} \( \frac{x}{|x|^2 \log \frac{aR}{|x|}} \)=\frac{1}{|x|^2 (\log \frac{aR}{|x|})^2}$,
we obtain
\begin{align*}
	\intO \dfrac{|\phi |^2}{|x|^2 (\log \frac{aR}{|x|})^2} d\mu
	&= \intO |\phi |^2 \rho (x) \text{div} \( \frac{x}{|x|^2 (\log \frac{aR}{|x|})} \) dx  \notag \\
	&= - \intO \( 2 \phi \rho \nabla \phi + |\phi|^2 \nabla \rho \) \cdot \frac{x}{|x|^2 \log \frac{aR}{|x|}} dx \notag \\
	&\le 2 \intO \frac{|\phi|}{|x| \log \frac{aR}{|x|}} \left| \nabla \phi + \frac{1}{2} \phi \frac{\nabla \rho}{\rho} \right| d\mu \notag \\
	&\le 2 \( \intO \dfrac{|\phi |^2}{|x|^2 (\log \frac{aR}{|x|})^2} d\mu \)^{\frac{1}{2}} \left\| \nabla \phi + \frac{1}{2} \phi \frac{\nabla \rho}{\rho} \right\|_{L^2(\Omega; d\mu )}.
\end{align*}
Therefore we have
\begin{align}
\label{general rho}
	\frac{1}{4} \intO \dfrac{|\phi |^2}{|x|^2 (\log \frac{aR}{|x|})^2} d\mu \le \left\| \nabla \phi + \frac{1}{2} \phi \frac{\nabla \rho}{\rho} \right\|^2_{L^2(\Omega; d\mu )}.
\end{align}
By applying the same argument as the proof of Proposition 3.1. in \cite{GGR(AA)} with (H2), we obtain
\begin{align}\label{GGR est}
	\left\| \nabla \phi + \frac{1}{2} \phi \frac{\nabla \rho}{\rho} \right\|^2_{L^2(\Omega; d\mu )}
	\le \left[ \frac{4}{1-2\ep}\( \frac{1}{4} +\frac{\eta}{2} \) + 1+\frac{1}{2\eta} \right] \intO |\nabla \phi |^2 d\mu +\frac{C_\ep (1+2\eta )}{2(1-2\ep)} \intO \phi^2 d\mu
\end{align}
for each $\eta >0$, $\ep \in (0, \frac{1}{2})$ and a constant $C_\ep >0$ in (H2).
If we take $\eta=\frac{\sqrt{1-2\ep}}{2}$, then the function $\eta \mapsto \frac{4}{1-2\ep}\( \frac{1}{4} +\frac{\eta}{2} \) + 1+\frac{1}{2\eta}$ 
in (\ref{GGR est}) attains its minimum $\frac{2(1-\ep +\sqrt{1-2\ep})}{1-2\ep}$, which goes to $4$ from above as $\ep \to 0$.
Therefore, from (\ref{general rho}) and (\ref{GGR est}), we get (\ref{WH general}).
\end{proof}
\qed

%
%
\section{Existence and nonexistence of positive solution}

In this section, we consider the following two dimensional Kolmogorov equation perturbed by a singular potential
\begin{align*}
(K_V) \quad 
	\begin{cases}
	\quad \pd_t u(x,t) &= Lu(x,t) + V(x)u(x,t), \quad \,\, t>0, x \in \Omega, \\
	\quad u(x,t) &= 0,  \hspace{9em}  t>0, x \in \pd \Omega, \\
	\quad u(x,0) &= u_0(x), \hspace{7.5em} x \in \Omega
	\end{cases}
\end{align*}
where $\Omega \subseteq \re^2$ is a domain, $0 \in \Omega$, $u_0 \in L^2(\Omega; d\mu)$,
$d\mu = \rho(x) dx$ is a probability Borel measure,
$V \in L_{loc}^1(\Omega)$, $V \ge 0$, and $L$ is the Kolmogorov operator given by
\begin{align*}
	Lu=\lap u +\frac{\nabla \rho}{\rho} \cdot \nabla u.
\end{align*}
Of course if $\Omega = \re^2$, we do not impose the Dirichlet boundary conditions.
Especially, if 
$\rho(x)=\rho_A(x) = c \exp \( -\frac{1}{2} (x^t A x) \)$ and $A$ is a positive definite real $2 \times 2$-symmetric matrix, 
then $L = L_A$ is the symmetric Ornstein-Uhlenbeck operator $L_A u=\lap u -Ax \cdot \nabla u$. 
We define the bottom of the spectrum of $-(L+V)$ to be
\begin{align*}
	\la_1 (L+V):= \inf_{0 \neq \phi \in H_0^1(\Omega; d\mu )} \frac{\intO |\nabla \phi |^2 d\mu - \intO V \phi^2 d\mu}{\intO \phi^2 d\mu}.
\end{align*}

We put the following definition.
\begin{definition}
\label{def weak}
We say that $u$ is a weak solution to $(K_V)$ if for each $T>0$ and any compact subset $K \subset \Omega$, 
we have $u \in C([0,T];L^2(\Omega; d\mu )), Vu \in L^1(K \times (0,T), d\mu dt)$ and 
\begin{align*}
	\int_0^T \intO u (-\pd_t \phi -L\phi ) d\mu dt -\intO u_0 \phi(\cdot, 0) d\mu = \int_0^T \intO V u \phi d\mu dt
\end{align*}
for all $\phi \in W_2^{2,1}(Q_T)$ having compact support with $\phi(\cdot, T)=0$.
Here $Q_T = \Omega \times (0, T)$ and $W_2^{2,1}(Q_T)$ denotes a standard parabolic Sobolev space:
\[
	W_2^{2,1}(Q_T) = \{ u \in L^2(Q_T) \,:\, D_x^{\alpha} u \in L^2(Q_T) \, \text{for} \, |\alpha| \le 2, \, \pd_t u \in L^2(Q_T) \}.
\]
\end{definition}


Let $\Omega$ be a smooth bounded domain in $\re^2$, $0 \in \Omega$, and $R=\sup_{x \in \Omega} |x|$. 
In this case, as in \cite{Cabre-Martel} and \cite{GGR(AA)}, we can obtain the existence and nonexistence result of solutions to $(K_V)$ as follows. 
Since the way of the proof is almost the same as \cite{Cabre-Martel} and \cite{GGR(AA)}, we give here the outline of the proof only.

%
%
\begin{theorem}
\label{Thm:bdd}
Assume that $0<\rho \in C^1(\Omega) \cap C(\overline{\Omega})$ and $0 \le V \in L^1_{\text{loc}}(\Omega)$. 
Then the following assertions hold:

(i) If $\la_1 (L+V) > -\infty$, then for any $u_0 \ge 0$, $u_0 \in L^2(\Omega; d\mu)$,
there exists a positive weak solution $u \in C([0, \infty), L^2(\Omega; d\mu ))$ of $(K_V)$ satisfying 
\begin{align}
\label{decay}
	\| u(t) \|_{L^2(\Omega; d\mu )} \le Me^{\omega t} \| u_0\|_{L^2(\Omega; d\mu)}, \quad t \ge 0
\end{align}
for some constants $M \ge 1$ and $\omega \in \re$.

(ii) If $\la_1 (L+V) = -\infty$, then for any $0 \le u_0 \in L^2(\Omega; d\mu) \setminus \{ 0\}$, there is no positive weak solution of $(K_V)$ satisfying (\ref{decay}).
\end{theorem}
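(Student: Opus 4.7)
The plan is to follow the scheme of Cabré--Martel \cite{Cabre-Martel}, adapted to the Kolmogorov setting as in \cite{GGR(AA)}. The key device is truncation of the potential: for each $n \in \N$ set $V_n := \min(V, n) \in L^\infty(\Omega)$. Since $\rho \in C^1(\Omega)\cap C(\overline{\Omega})$ and $\rho > 0$ on $\overline{\Omega}$, the operator $L$ is a uniformly elliptic symmetric operator on $L^2(\Omega; d\mu)$ associated with the closed symmetric form $\mathcal{E}(\phi,\psi) = \intO \nabla\phi\cdot\nabla\psi \, d\mu$ on $H^1_0(\Omega; d\mu)$, and hence generates a positivity preserving $C_0$-semigroup. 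Since $V_n$ is bounded, $L + V_n$ also generates a positivity preserving $C_0$-semigroup $T_n(t)$ on $L^2(\Omega; d\mu)$ by bounded perturbation, and classical parabolic regularity makes $u_n(t) := T_n(t) u_0$ a positive weak solution of $(K_{V_n})$ in the sense of Definition \ref{def weak}.

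For part (i), the spectral bottom of $-(L+V_n)$ satisfies $\lambda_1(L+V_n) \ge \lambda_1(L+V) > -\infty$, so the standard identity for symmetric semigroups gives $\|T_n(t)\|_{L^2 \to L^2} \le e^{-\lambda_1(L+V)t}$. Since $V_n \nearrow V$ pointwise, the sequence $u_n$ is nondecreasing in $n$ (this follows from comparison for bounded-potential parabolic problems), and the uniform $L^2$-bound permits monotone passage to the limit: $u_n \nearrow u$ in $C([0,T]; L^2(\Omega; d\mu))$ for every $T>0$. Passing to the limit in the weak formulation, using monotone convergence on the potential term $\intO V_n u_n \phi\, d\mu dt$, yields a positive weak solution $u$ of $(K_V)$ satisfying (\ref{decay}) with $\omega = -\lambda_1(L+V)$ and some $M \ge 1$.

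For part (ii), argue by contradiction: assume $0\le u_0 \in L^2\setminus\{0\}$ and that some positive weak solution $u$ of $(K_V)$ satisfies (\ref{decay}) with constants $M, \omega$. Since $V_n \le V$ and $u$ is a (super)solution of $(K_{V_n})$, a parabolic comparison principle (available because $V_n$ is bounded) gives $0 \le u_n \le u$. Taking $L^2$-norms yields $\|T_n(t) u_0\|_{L^2(\Omega; d\mu)} \le M e^{\omega t}\|u_0\|_{L^2(\Omega; d\mu)}$ for every $u_0 \ge 0$, and then for all $u_0 \in L^2$ by splitting into positive and negative parts. Hence $\|T_n(t)\|_{L^2 \to L^2} \le M e^{\omega t}$ uniformly in $n$, which by the symmetry of $L + V_n$ forces $\lambda_1(L+V_n) \ge -\omega$. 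Since $V_n \nearrow V$, the variational characterization of $\lambda_1$ gives $\lambda_1(L+V) = \lim_{n \to \infty} \lambda_1(L+V_n) \ge -\omega > -\infty$, contradicting the assumption $\lambda_1(L+V)=-\infty$.

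The main obstacle is justifying the two comparison statements (the monotonicity $u_n \nearrow$ and the domination $u_n \le u$) in the \emph{weak} sense of Definition \ref{def weak}: for this one approximates $u$ by mollification in $t$ and cut-off in $x$, tests the equation for $u - u_n$ against $((u-u_n)^-)^+$-type functions multiplied by an appropriate parabolic weight, and uses the boundedness of $V_n$ together with the regularity of $\rho$ to absorb the drift term. Once this is in place, the remainder of the argument is a direct transcription of the Cabré--Martel scheme.
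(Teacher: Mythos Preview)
Your argument for part (i) is essentially the same as the paper's: truncate $V$ to $V_n=\min(V,n)$, use that the bounded-potential problem $(K_{V_n})$ is well-posed and generates an increasing sequence $u_n$ with the uniform bound $\|u_n(t)\|_{L^2_\mu}\le e^{-\lambda_1(L+V)t}\|u_0\|_{L^2_\mu}$, and pass to the limit by monotone convergence. (The paper additionally truncates the initial data, $u_{0,n}=\min(u_0,n)$, to obtain classical solutions, but this is cosmetic.)

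In part (ii) there is a genuine gap. The contradiction hypothesis gives you \emph{one} initial datum $u_0\ge 0$, $u_0\not\equiv 0$, and one positive weak solution $u$ obeying \eqref{decay}. From the comparison $u_n\le u$ you therefore obtain only
\[
\|T_n(t)u_0\|_{L^2_\mu}\le M e^{\omega t}\|u_0\|_{L^2_\mu}
\]
for that single $u_0$, not ``for every $u_0\ge 0$'' as you write; hence the step to the operator-norm bound $\|T_n(t)\|\le M e^{\omega t}$ is unjustified. (Your approach can be repaired: since $-(L+V_n)$ has compact resolvent on the bounded domain $\Omega$ and its ground state $\psi_n>0$ is simple, Perron--Frobenius gives $\langle u_0,\psi_n\rangle>0$, whence $\|T_n(t)u_0\|\ge e^{-\lambda_1(L+V_n)t}\langle u_0,\psi_n\rangle$; comparing with the upper bound and sending $t\to\infty$ yields $\lambda_1(L+V_n)\ge -\omega$. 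But this is not what you wrote.)

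The paper proceeds differently in (ii), via the Cabr\'e--Martel logarithm trick rather than an operator-norm argument. One fixes $\phi\in C_c^\infty(\Omega)$, tests $(K_{V_n})$ against $\phi^2/u_n$ (using a Green-function lower bound $u_n\ge v_1\ge c_r(t)>0$ on $\operatorname{supp}\phi$ to make this legitimate) and obtains, after integrating in time,
\[
(t-1)\int_\Omega V_n\phi^2\,d\mu\le \int_\Omega\log\frac{u_n(t)}{u_n(1)}\,\phi^2\,d\mu+(t-1)\int_\Omega|\nabla\phi|^2\,d\mu.
\]
Using $u_n\le u$ together with \eqref{decay} and Jensen's inequality bounds the logarithmic term by $\log(M\|u_0\|)+\omega t+O(1)$, so letting $n\to\infty$ and then $t\to\infty$ gives $\int_\Omega V\phi^2\,d\mu-\int_\Omega|\nabla\phi|^2\,d\mu\le C<\infty$ for every such $\phi$, contradicting $\lambda_1(L+V)=-\infty$. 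This argument uses the exponential bound on $u$ only through $\log\|u(t)\|$ at the single datum $u_0$, which is exactly what the hypothesis provides.
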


\begin{proof} {\it \,of Theorem \ref{Thm:bdd}}.
\noindent
(i) 
Assume $\la_1 (L+V) > -\infty$ and take $u_0 \ge 0$, $u_0 \not\equiv 0$. 
Set $V_n(x)=\min \{ V(x), n \}$ and $u_{0, n}(x)=\min \{ u_0(x), n \}$. 
Note that since $\rho \in C(\overline{\Omega})$, 
the $L^2$ norm is equivalent to the $L^2_\mu$ norm. 
Consider the following truncated problem $(K_{V_n})$:
\begin{align*}
(K_{V_n}) \quad 
	\begin{cases}
	\quad \pd_t u_n(x,t) &= Lu_n(x,t) + V_n(x)u_n(x,t), \quad \,\, t>0, x \in \Omega, \\
	\quad u_n(x,t) &= 0,  \hspace{9em} \quad  t>0, x \in \pd \Omega, \\
	\quad u_n(x,0) &= u_{0,n}(x), \hspace{7.5em}\quad x \in \Omega.
	\end{cases}
\end{align*}
Since $V_n$ and $u_{0,n}$ are bounded and nonnegative and the drift term $\frac{\nabla \rho}{\rho}$ is also bounded, 
$(K_{V_n})$ admits a unique positive classical solution $u_n$, see e.g. Proposition C.3.2. in \cite{LB book}.
Furthermore $0 < u_n(x,t) \le u_{n+1}(x,t)$ for $n \in \N$ holds on $\Omega \times (0, \infty)$, see e.g. Proposition C.2.3. in \cite{LB book}. 
If we multiply $(K_{V_n})$ by $u_n$ and integrate by parts, we obtain the following in the same way as \cite{GGR(AA)}:
\begin{align*}
	\| u_n(t) \|_{L^2(\Omega; d\mu )} \le e^{-\la_1 (L+V) t} \| u_{0,n} \|_{L^2(\Omega; d\mu)}, \quad t \ge 0,
\end{align*}
which yields that
\begin{align}
\label{proof decay}
	\| u_n(t) \|_{L^2(\Omega; d\mu )} \le e^{-\la_1 (L+V) t} \| u_{0} \|_{L^2(\Omega; d\mu)}, \quad t \ge 0.
\end{align}
By the monotone convergence theorem, we observe that $u_n(t)$ converges to $u(t)$ in $L^2(\Omega; d\mu)$ uniformly for $t \in [0, T]$. 
Since $u_n$ is a weak solution of $(K_{V_n})$, it follows that $u$ is a weak solution of $(K_V)$. The estimate (\ref{decay}) follows from (\ref{proof decay}) and it holds with $M=1$.

\noindent
(ii) 
Assume $\la_1 (L+V) = -\infty$ and assume that there exists a positive solution $u$ of $(K_V)$ with initial data $0 \le u_0 \in L^2(\Omega; d\mu) \setminus \{ 0\}$ satisfying (\ref{decay}). 
We shall derive a contradiction. 
Fix $\phi \in C_c^{\infty}(\Omega )$ with $\int_\Omega \phi^2 dx =1$. 
Let $u_n$ be the unique solution of $(K_{V_n})$ and $v_n$ be the unique solution of 
\begin{align*}
(K_n) \quad 
	\begin{cases}
	\quad \pd_t v_n(x,t) &= Lv_n(x,t), \quad  t>0, x \in \Omega, \\
	\quad v_n(x,t) &= 0,  \qquad  \quad  \, t>0, x \in \pd \Omega, \\
	\quad v_n(x,0) &= u_{0,n}(x), \quad \,\,\, x \in \Omega.
	\end{cases}
\end{align*}
We observe that 
\begin{align}
\label{u_n to v_n}
	u(x,t ) \ge u_n (x, t) \ge v_n(x,t) \ge v_1(x,t), \quad t \ge 0.
\end{align}
It is known that there exists a unique positive function $G_{\Omega} \in C((0,\infty) \times \Omega \times \Omega)$ such that for $u_{0,n} \in C(\ol{\Omega})$,
\begin{align*}
	v_n(t,x)=\intO G_\Omega (t,x,y)u_{0,n}(y) dy, \quad t>0, x \in \Omega,
\end{align*}
see e.g. Proposition C.3.2. in \cite{LB book}. 
Since there exists a ball $B_r$ such that $u_{0,1}(x) > 0$ for $x \in B_r$, we observe that for a.e. $x \in \text{supp} \phi$,
\begin{align*}
	v_1(t,x)&=\intO G_\Omega (t,x,y)u_{0,1}(y) dy \\
	&\ge \( \min_{(x,y) \in \text{supp} \phi \times B_r} G_\Omega (t,x,y) \) \int_{B_r} u_{0,1}(x) dx =: c_r(t; u_{0,1})>0.
\end{align*}
Thus by \eqref{u_n to v_n}, we have $u_n(x,t) \ge c_r(t; u_{0,1})>0$.
If we multiply $(K_{V_n})$ by $\frac{\phi^2}{u_n}$ and integrate by parts, 
then for every $t>1$ we obtain
\begin{align*}
	\int_{\Omega} V_n \phi^2 d\mu \le \pd_t \( \int_{\Omega} (\log u_n(t)) \phi^2d\mu \) + \int_{\Omega} | \nabla \phi |^2 d\mu.
\end{align*}
By integrating from $t=1$ to $t = t$, we have 
\begin{align*}
	(t-1) \int_{\Omega} V_n \phi^2 d\mu \le \int_{\Omega} \( \log \frac{u_n(t)}{u_n(1)} \) \phi^2 d\mu + (t-1) \int_{\Omega} | \nabla \phi |^2 d\mu
\end{align*}
for $t > 1$ and any $n \in \N$, see \cite{GGR(AA)}.
Since there exists a minimal solution $\tilde{u}(t):= \lim_{n \to \infty} u_n(t)$ by (\ref{u_n to v_n}) and the monotone convergence theorem, we obtain 
\begin{align*}
	&\int_{\Omega} V \phi^2 d\mu - \int_{\Omega} | \nabla \phi |^2 d\mu \le \frac{1}{(t-1)} \left[ \int_{\Omega} \( \log \tilde{u}(t) \) \phi^2 d\mu - \int_{\Omega} \( \log \tilde{u}(1) \) \phi^2 d\mu \right] \\
	&\le \frac{1}{(t-1)} \left[  \log (M \| u_0\|_{L^2(\Omega ; d\mu )}) +\omega t +  \log \| \phi \|_\infty - \int_{\Omega} \( \log \tilde{u}(1) \) \phi^2 d\mu \right] \le C <\infty
\end{align*}
in the same way as \cite{GGR(AA)}. 
This contradicts the assumption $\la_1 (L+V) = -\infty$. 
Therefore there is no positive weak solution of $(K_V)$ satisfying (\ref{decay}).
\end{proof}
\qed

As a consequence of Theorem \ref{Thm:bdd} and Remark \ref{rem:WH}, we obtain the main result.

%
%
\begin{theorem}
\label{Thm:heat}
Let $\Omega$ be a bounded domain in $\re^2$, $0 \in \Omega$, $a \ge 1$, and $R=\sup_{x \in \Omega}|x|$.
Assume that $A$ be a positive definite real $2 \times 2$-symmetric matrix and $0 \le V(x) \le \frac{c}{|x|^2(\log \frac{aR}{|x|})^2}$ Then the followings hold:

(i) If \, $0 \le c \le \frac{1}{4}$, then there exists a positive weak solution $u \in C([0, \infty), L^2(\Omega; d\mu_A ))$ of
\begin{align}
\label{drift eq}
\begin{cases}
	\quad \pd_t u(x,t) &= \lap u(x,t) - Ax \cdot \nabla u(x,t) + V(x)u(x,t), \quad t>0, x \in \Omega, \\
	\quad u(x,t) &= 0,  \hspace{15em}  t>0, x \in \pd \Omega, \\
	\quad u(x,0) &= u_0(x), \hspace{13.5em} x \in \Omega,
\end{cases}
\end{align}
satisfying 
\begin{align}\label{decay A}
\| u(t) \|_{L^2(\Omega; d\mu_A )} \le Me^{\omega t} \| u_0\|_{L^2(\Omega; d\mu_A)}, \quad t \ge 0
\end{align}
for some constants $M \ge 1, \omega \in \re$, and any $0 \le u_0 \in L^2(\Omega; d\mu_A)$.

(ii) If $c>\frac{1}{4}$, then for any $0 \le u_0 \in L^2(\Omega; d\mu_A) \setminus \{ 0\}$, there is no positive weak solution of (\ref{drift eq}) with $V(x)=\frac{c}{|x|^2(\log \frac{aR}{|x|})^2}$ satisfying (\ref{decay A}).
\end{theorem}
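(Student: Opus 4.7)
The plan is to deduce Theorem \ref{Thm:heat} directly from Theorem \ref{Thm:bdd} applied with $\rho = \rho_A$ (taking $p = N = 2$ in \eqref{rho_A}): then $\nabla \rho_A/\rho_A = -Ax$ and the abstract Kolmogorov operator $L$ coincides with the symmetric Ornstein-Uhlenbeck operator $L_A u = \Delta u - Ax\cdot \nabla u$ appearing in \eqref{drift eq}. The density $\rho_A$ is in $C^{\infty}(\overline{\Omega})$ and the potential $V(x) = c/(|x|^{2}(\log aR/|x|)^{2})$ is easily seen to lie in $L^{1}_{\text{loc}}(\Omega)$ (near $0$, polar coordinates reduce it to $\int t^{-2} dt$ from some $t_0$ to $\infty$). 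Hence the only remaining task is to identify where the bottom of the spectrum
$$\lambda_{1}(L_A+V) = \inf_{0 \neq u \in H_0^{1}(\Omega; d\mu_A)} \frac{\int_\Omega |\nabla u|^{2} d\mu_A - \int_\Omega V u^{2} d\mu_A}{\int_\Omega u^{2} d\mu_A}$$
drops from finite to $-\infty$ as $c$ crosses the value $1/4 = \bigl(\tfrac{N-1}{N}\bigr)^{N}\big|_{N=2}$.

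For part (ii), fix $V = c/(|x|^{2}(\log aR/|x|)^{2})$ with $c > 1/4$. Remark \ref{rem:WH} applied with $N = 2$, $\tilde{\lambda} = 0$, $\lambda = c$ immediately gives $\lambda_{1}(L_A + V) = -\infty$, and Theorem \ref{Thm:bdd}(ii) delivers the nonexistence statement in (ii).

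For part (i), assume $0 \le V \le c/(|x|^{2}(\log aR/|x|)^{2})$ with $0 \le c \le 1/4$. Using $V \ge 0$, then applying the critical weighted Hardy inequality \eqref{WH} with $N = 2$ and the trivial bound $|\nabla u \cdot x/|x||^{2} \le |\nabla u|^{2}$ yields
\begin{align*}
\int_\Omega |\nabla u|^{2} d\mu_A - \int_\Omega V u^{2} d\mu_A
&\ge (1-4c)\int_\Omega |\nabla u|^{2} d\mu_A - 2c \int_\Omega \frac{u^{2} (x^{t}Ax)}{|x|^{2}\log(aR/|x|)} d\mu_A.
\end{align*}
The first term is nonnegative thanks to $c \le 1/4$. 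Since $A$ is positive definite, $x^{t}Ax/|x|^{2} \le |A|$ on $\Omega$, so it suffices to control $\int_\Omega u^{2}/\log(aR/|x|) \, d\mu_A$ by a constant multiple of $\|u\|_{L^{2}(\Omega;d\mu_A)}^{2}$ (possibly with a small share of the gradient term). When $a > 1$, $\log(aR/|x|) \ge \log a > 0$ on $\Omega$, so this quantity is bounded by $(\log a)^{-1}\|u\|_{L^{2}(d\mu_A)}^{2}$ and consequently $\lambda_{1}(L_A+V) \ge -2c|A|/\log a > -\infty$. Theorem \ref{Thm:bdd}(i) then produces a positive weak solution with the exponential bound \eqref{decay A}.

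The main obstacle is the remainder integral carrying only one power of $\log(aR/|x|)$ in the denominator: the case $a > 1$ is straightforward as above, but when $a = 1$ the weight $1/\log(R/|x|)$ behaves like $R/(R-|x|)$ near the outer sphere $\partial B(R)\cap \overline{\Omega}$. I expect the clean fix there is a boundary Hardy inequality tailored to the Dirichlet data $u \in W_{\mu_A,0}^{1,2}(\Omega)$, which absorbs the singular boundary contribution into a combination of $\int_\Omega |\nabla u|^{2} d\mu_A$ and $\int_\Omega u^{2} d\mu_A$ and preserves the conclusion $\lambda_{1}(L_A+V) > -\infty$. This technical step is the only delicate ingredient; everything else is a direct reading of Theorem \ref{Thm:WH}, Remark \ref{rem:WH}, and Theorem \ref{Thm:bdd}.
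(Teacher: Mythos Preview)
Your overall strategy---reduce to Theorem \ref{Thm:bdd} by checking whether $\lambda_1(L_A+V)$ is finite or $-\infty$, using \eqref{WH} for (i) and Remark \ref{rem:WH} for (ii)---is exactly what the paper does; the paper's own proof is the single sentence ``As a consequence of Theorem \ref{Thm:bdd} and Remark \ref{rem:WH}'' and supplies no further detail. Your treatment of (ii) and of (i) with $a>1$ is complete and matches this.

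The one point you flag as unfinished---the case $a=1$ in (i)---is a genuine subtlety that the paper simply passes over. Your suggested patch via a boundary Hardy inequality works when $c<\tfrac14$, since the surviving term $(1-4c)\int_\Omega|\nabla u|^2\,d\mu_A$ can absorb an $\varepsilon\int_\Omega|\nabla u|^2\,d\mu_A$ coming from the boundary estimate. At the endpoint $c=\tfrac14$, however, that coefficient vanishes and there is nothing left to absorb into; the remainder $\int_\Omega u^2\big(\log\tfrac{R}{|x|}\big)^{-1}d\mu_A$ is \emph{not} controlled by $\|u\|_{L^2(d\mu_A)}^2$ alone when $\overline{\Omega}$ touches $\{|x|=R\}$. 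A cleaner way to close the gap for all $a\ge 1$ and all $0\le c\le\tfrac14$ is the ground-state substitution $w=u\sqrt{\rho_A}$: a direct computation gives
\[
\int_\Omega|\nabla u|^2\,d\mu_A-\int_\Omega Vu^2\,d\mu_A
=\int_\Omega|\nabla w|^2\,dx-\int_\Omega Vw^2\,dx-\tfrac{\operatorname{tr}A}{2}\int_\Omega w^2\,dx+\tfrac14\int_\Omega|Ax|^2 w^2\,dx,
\]
and the unweighted critical Hardy inequality on $B(R)\supset\Omega$ (which has no remainder term) yields $\int|\nabla w|^2\,dx\ge\int Vw^2\,dx$. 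Since $\int w^2\,dx=\int u^2\,d\mu_A$, this gives $\lambda_1(L_A+V)\ge -\tfrac12\operatorname{tr}A>-\infty$ outright.
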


Furthermore the following result also follows from Proposition \ref{Prop:WH general} and Theorem \ref{Thm:bdd}.

\begin{corollary}\label{general heat}
Let $\Omega$ be a bounded domain in $\re^2$, $0 \in \Omega$, $a \ge 1$, and $R=\sup_{x \in \Omega}|x|$.
Assume that $(H1)-(H2)$ in \S 2 are satisfied and $0 \le V(x) \le \frac{c}{|x|^2(\log \frac{aR}{|x|})^2}$. 
If $c<\frac{1}{16}$, 
then there exists a weak solution $u \in C([0, \infty), L^2(\Omega; d\mu ))$ of $(K_V)$ satisfying 
\begin{align}
\label{decay general}
	\| u(t) \|_{L^2(\Omega; d\mu )} \le Me^{\omega t} \| u_0\|_{L^2(\Omega; d\mu)}, \quad t \ge 0
\end{align}
for some constants $M \ge 1, \omega \in \re$, and any $0 \le u_0 \in L^2(\Omega; d\mu)$.
\end{corollary}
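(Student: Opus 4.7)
The plan is to deduce the corollary from Theorem \ref{Thm:bdd}(i), which produces a nonnegative weak solution with the required exponential bound as soon as one shows $\la_1(L+V) > -\infty$. So the entire task reduces to controlling $\intO V\phi^2\,d\mu$ by $\intO |\nabla \phi|^2\,d\mu$ plus a multiple of $\intO \phi^2\,d\mu$, uniformly over test functions $\phi \in H^1_0(\Omega;d\mu)$.

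First I would use the pointwise bound $0 \le V(x) \le c/(|x|^2(\log \tfrac{aR}{|x|})^2)$ to write
\[
\intO V\phi^2\,d\mu \;\le\; c \intO \frac{\phi^2}{|x|^2(\log \tfrac{aR}{|x|})^2}\,d\mu,
\]
and then apply Proposition \ref{Prop:WH general}, which is available because $(H1)$--$(H2)$ are assumed. For each $\delta > 0$ this gives
\[
\intO V\phi^2\,d\mu \;\le\; 4c(4+\delta)\intO |\nabla \phi|^2\,d\mu \;+\; 4cC_\delta \intO \phi^2\,d\mu.
\]
The crucial observation is that the hypothesis $c < \tfrac{1}{16}$ means $16c < 1$, so I can fix $\delta > 0$ small enough that $4c(4+\delta) = 16c + 4c\delta < 1$. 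Then the residual gradient term $(1-4c(4+\delta))\intO |\nabla\phi|^2\,d\mu$ is nonnegative and may be dropped, yielding
\[
\intO |\nabla \phi|^2\,d\mu - \intO V\phi^2\,d\mu \;\ge\; -4cC_\delta \intO \phi^2\,d\mu.
\]
Dividing by $\intO \phi^2\,d\mu$ and taking the infimum over $0 \neq \phi \in H^1_0(\Omega;d\mu)$, this produces $\la_1(L+V) \ge -4cC_\delta > -\infty$. An appeal to Theorem \ref{Thm:bdd}(i) then delivers the desired weak solution of $(K_V)$ satisfying \eqref{decay general}, with $M = 1$ and any $\omega \ge 4cC_\delta$.

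I expect no serious obstacle here: the corollary is really a packaging of Proposition \ref{Prop:WH general} (the analytic ingredient) and Theorem \ref{Thm:bdd}(i) (the parabolic ingredient), with the threshold $\tfrac{1}{16}$ emerging precisely as the product of the factor $\tfrac{1}{4}$ on the left-hand side of the Hardy-type inequality and the limiting factor $4$ obtained as $\delta\searrow 0$ on the right-hand side. The only delicate point is that $\delta$ must be chosen \emph{strictly} positive, so the strict inequality $c < \tfrac{1}{16}$ in the hypothesis is indispensable; any argument trying to reach the endpoint $c = \tfrac{1}{16}$ would need a sharpened form of Proposition \ref{Prop:WH general} and lies outside the scope of this deduction.
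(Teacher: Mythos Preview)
Your proposal is correct and matches the paper's own approach: the paper states that Corollary \ref{general heat} ``follows from Proposition \ref{Prop:WH general} and Theorem \ref{Thm:bdd}'' without further detail, and you have supplied exactly that deduction, correctly identifying that the threshold $\tfrac{1}{16}$ arises from the product of the factor $\tfrac14$ on the left of \eqref{WH general} and the limiting constant $4$ on the right as $\delta\searrow 0$.
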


%
%
\section{Weighted Hardy inequality on the half space}

In this section, 
we obtain a weighted $L^p$-Hardy inequality on the half space $\re^N_{+} = \{ x = (x^{\prime}, x_N) \in \re^{N-1} \times \re \, | \, x_N > 0 \}$.
Though the obtained inequality does not have any concrete application in this paper,
we hope it may be also useful to the study of the corresponding parabolic problems.

\begin{theorem}
\label{Thm:WH half}
Let $1<p<\infty$, 
let $A$ be a real $N \times N$-symmetric positive semi-definite matrix, 
and let 
\begin{align*}
	d \mu_A = \rho_A (x) dx \quad \text{with} \quad \rho_A (x)=c \cdot \text{exp} \( -\frac{1}{p} (x^t Ax)^{\frac{p}{2}}\), \,\, x \in \re^N_{+},
\end{align*}
where $c$ is chosen so that $\int_{\re^N_{+}} \rho_A dx = 1$.
Then the inequality
\begin{align}
\label{WH half}
	\( \frac{p-1}{p} \)^p \int_{\re^N_{+}} \dfrac{|u|^p}{x_N^p} d\mu_A \le \int_{\re^N_{+}} \left| \frac{\pd u}{\pd x_N} \right|^p d\mu_A 
	- \( \frac{p-1}{p} \)^{p-1} \int_{\re^N_{+}} \dfrac{|u|^p (x^t Ax)^{\frac{p-2}{2}} (Ax)_N}{x_N^{p-1}} d\mu_A
\end{align}
holds for all $u \in W_{\mu_A, 0}^{1,p}(\re^N_+)$. 
Moreover if $A$ is positive definite, then the constant $(\frac{p-1}{p})^p$ in the left-hand side of \eqref{WH half} is optimal.
\end{theorem}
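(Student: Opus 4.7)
The plan is to adapt the vector-field argument from the proof of Theorem \ref{Thm:WH} to the half-space geometry: I would replace the radial vector field by one pointing in the $x_N$-direction and drop the logarithmic weight. By density it suffices to treat $u \in C_c^\infty(\re^N_+)$, for which all boundary contributions in the integration by parts vanish automatically. Fix a parameter $\la \ge 0$ to be optimized later and take
\[
  \F(x) = \la\, \rho_A(x)\, \frac{e_N}{x_N^{p-1}},
\]
where $e_N$ is the $N$-th standard basis vector. Using the identity $\nabla \rho_A = -\rho_A(x^t A x)^{(p-2)/2} A x$ one computes $\mathrm{div}\,\F$ explicitly; pairing it with $|u|^p$ via integration by parts yields
\[
  \la(p-1) \int_{\re^N_+} \frac{|u|^p}{x_N^p}\, d\mu_A + \la \int_{\re^N_+} \frac{|u|^p (x^t A x)^{(p-2)/2}(Ax)_N}{x_N^{p-1}}\, d\mu_A = p\la \int_{\re^N_+} |u|^{p-2} u\, \frac{\pd_{x_N} u}{x_N^{p-1}}\, d\mu_A,
\]
whose right-hand side is estimated by Young's inequality $pab \le a^p + (p-1) b^{p/(p-1)}$ with $a = |\pd_{x_N} u|$ and $b = \la |u|^{p-1}/x_N^{p-1}$. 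Rearranging produces a one-parameter family of inequalities, and the maximum of $\la \mapsto (p-1)(\la - \la^{p/(p-1)})$ on $[0,\infty)$ is $\bigl(\tfrac{p-1}{p}\bigr)^p$, attained at $\la_0 = \bigl(\tfrac{p-1}{p}\bigr)^{p-1}$, reproducing \eqref{WH half} exactly.

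For sharpness when $A$ is positive definite, the plan is to test \eqref{WH half} against the family
\[
  \varphi_{\gamma,\ep}(x) = x_N^\gamma\, \xi_\ep(x_N)\, \eta(x'),
\]
with $\gamma > (p-1)/p$ a parameter to be sent to $(p-1)/p$ from above, $\xi_\ep \in C_c^\infty([0,\infty))$ a cutoff equal to $1$ on $[0,\ep/2]$ and vanishing outside $[0,\ep]$, and $\eta \in C_c^\infty(\re^{N-1})$ a fixed nonnegative bump function. Since $\rho_A$ is bounded above and below on any bounded subset of $\re^N_+$, both $\int_{\re^N_+} |\varphi_{\gamma,\ep}|^p / x_N^p\, d\mu_A$ and $\int_{\re^N_+} |\pd_{x_N} \varphi_{\gamma,\ep}|^p\, d\mu_A$ diverge like $(\gamma - (p-1)/p)^{-1}$ with ratio of leading coefficients equal to $\gamma^p$. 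Positive definiteness of $A$, together with $|(Ax)_N| \le \|A\|\,|x|$ and $(x^t A x)^{(p-2)/2} \lesssim |x|^{p-2}$, forces the perturbation integrand to be $\lesssim |x|^{p-1}$ near the origin, so the perturbation term remains $O(1)$ as $\gamma \searrow (p-1)/p$. This mirrors the chain of estimates \eqref{derivative1}--\eqref{nume1} in the proof of Theorem \ref{Thm:WH} and rules out any strictly larger constant in \eqref{WH half}.

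The main obstacle is the sharpness step, specifically controlling the perturbation term $\int_{\re^N_+} |\varphi|^p (x^t A x)^{(p-2)/2}(Ax)_N / x_N^{p-1}\, d\mu_A$ uniformly as $\gamma \to (p-1)/p$: because $(Ax)_N$ has no definite sign on $\re^N_+$ whenever $A$ has nonzero off-diagonal entries, one cannot exploit monotonicity and must rely instead on the $|x|^{p-1}$ majorant to ensure that this contribution is genuinely of lower order than the divergent main terms. A secondary but routine check, in the spirit of \eqref{remainder}, is that the cutoff contributions from $\xi_\ep$ and $\eta$ stay bounded independently of $\gamma$ as it approaches its limit.
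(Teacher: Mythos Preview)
Your plan matches the paper's proof almost exactly: the same vector field $\F=\la\rho_A\,e_N/x_N^{p-1}$, the same divergence--Young--optimize argument for the inequality, and the same test family $x_N^\gamma$ times cutoffs with $\gamma\searrow(p-1)/p$ for sharpness, including the same bound $|(x^tAx)^{(p-2)/2}(Ax)_N|\lesssim|x|^{p-1}$ to keep the perturbation term $O(1)$.

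There is one point to tighten in the sharpness step. The paper localizes in \emph{both} variables at scale $\ep$, taking $\xi_\ep(|x'|)$ rather than a fixed $\eta(x')$; this forces $\rho_A$ to be uniformly close to $\rho_A(0)$ on the whole support once $\ep$ is small, so crude two-sided bounds $m\le\rho_A\le M$ with $M/m\to1$ already yield the ratio $\gamma^p$. With your fixed $\eta$, boundedness alone only gives a ratio in $[\gamma^p m/M,\gamma^p M/m]$ with $M/m>1$ fixed, which is not enough to beat every $\la>((p-1)/p)^p$. Your test function still works, but you must argue more: as $s:=\gamma p-p+1\to0^+$ one has $\int_0^{\ep/2}x_N^{s-1}\rho_A(x',x_N)\,dx_N=\rho_A(x',0)\,s^{-1}+O(1)$ by continuity of $\rho_A$ in $x_N$, so the leading coefficient of both divergent integrals is the same constant $\int\eta(x')^p\rho_A(x',0)\,dx'$ (up to the factor $\gamma^p$). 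Either add this dominated-convergence style step, or simply shrink $\eta$ with $\ep$ as the paper does.
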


\begin{proof} {\it \,of Theorem \ref{Thm:WH half}}.
%
It is enough to show that the inequality (\ref{WH half}) holds for all $u \in C_c^1 (\re^N_+)$.
We fix $\la \ge 0$ which will be chosen later. 
Set
\begin{align*}
	\F (x) = \( 0, \cdots, 0, \la \rho_A (x) x_N^{1-p} \)  \quad \text{for} \,\, x \in \re^N_+.
\end{align*}
Then we compute that 
\begin{align*}
	{\rm div} \F (x) = \frac{\pd}{\pd x_N} \( \la \rho_A (x) x_N^{1-p} \) =-\la \rho_A (x) \left[ \frac{p-1}{x_N^p} + \frac{(x^t Ax)^{\frac{p-2}{2}} (Ax)_N}{x_N^{p-1}}  \right].
\end{align*}
By applying integration by parts and Young's inequality, we have
\begin{align*}
	&\int_{\re^N_{+}} |u|^p \la \left[ \frac{p-1}{x_N^p} + \frac{(x^t Ax)^{\frac{p-2}{2}} (Ax)_N}{x_N^{p-1}} \right] \rho_A (x) dx \\
	&=-p \int_{\re^N_+} |u|^{p-2}u \, ( \nabla u \cdot \F ) \, dx \\
	&=-p \la \int_{\re^N_+} \frac{|u|^{p-2}u}{x_N^{p-1}} \( \frac{\pd u}{\pd x_N} \) d\mu_A \\
	&\le \int_{\re^N_+} \left| \frac{\pd u}{\pd x_N} \right|^p d\mu_A + (p-1) \la^{\frac{p}{p-1}} \int_{\re^N_+} \dfrac{|u|^p}{x_N^p} d\mu_A,
\end{align*}
which yields that
\begin{align*}
	(p-1) (\la  - \la^{\frac{p}{p-1}} ) \int_{\re^N_+} \dfrac{|u|^p}{x_N^p} d\mu_A 
	\le \int_{\re^N_+} \left| \frac{\pd u}{\pd x_N} \right|^p d\mu_A - \la \int_{\re^N_+} \dfrac{|u|^p (x^t Ax)^{\frac{p-2}{2}} (Ax)_N }{x_N^{p-1}} d\mu_A.
\end{align*}
If we choose $\la =\( \frac{p-1}{p} \)^{p-1}$ which attains the maximum of the function $\la \mapsto (\la  - \la^{\frac{p}{p-1}} )$ on the half line $[0, \infty )$, 
then we obtain the inequality (\ref{WH half}) for all $u \in C_c^1 (\re^N_+)$. 
Therefore the inequality (\ref{WH half}) also holds for all $u \in W_{\mu_A, 0}^{1,N}(\re^N_+)$ by density.

%
%
Next we show the optimality of the constant $(\frac{p-1}{p})^p$ in (\ref{WH half}) if $A$ is positive definite. 
To do so, we fix $\la> (\frac{p-1}{p})^p$ and take any $\tilde{\la} \in \re$. 
Set 
\begin{align*}
	&E(u)=\int_{\re^N_{+}} \left| \frac{\pd u}{\pd x_N} \right|^p d\mu_A - \tilde{\la} \int_{\re^N_{+}} \dfrac{|u|^p (x^t Ax)^{\frac{p-2}{2}} (Ax)_N}{x_N^{p-1}} d\mu_A, \\
	&F_\la (u)=\la \int_{\re^N_{+}} \dfrac{|u|^p}{x_N^p} d\mu_A
\end{align*}
for $u \in W_{\mu_A, 0}^{1,N}(\re^N_+ ) \setminus \{ 0\}$.
Now we consider a test function $\varphi_{\gamma, \ep} \in W_{\mu_A, 0}^{1,p}(\re^N_+)$ given by
\begin{align*}
	\varphi_{\gamma, \ep} (x) = x_N^\gamma \, \xi_\ep (x_N) \, \xi_\ep (|x'|),
\end{align*}
where $x=(x', x_N) \in \re^{N-1} \times \re_+$, 
$\gamma > \frac{p-1}{p}$, 
$\ep>0$ will be chosen later independent of $\gamma$, 
and $\xi_\ep$ is a cut-off function defined by
\begin{align*}
	\xi_\ep (t)=
	\begin{cases}
		1, \,\,\,&\text{if} \,\,\,0\le t \le \frac{\ep}{2},  \\
		\frac{2}{3\ep} (2\ep -t),  &\text{if} \,\,\, \frac{\ep}{2} < t < 2\ep, \\
		0, &\text{if} \,\,\, t \ge 2 \ep.
	\end{cases}
\end{align*}
Note that there exist $\alpha_1, \alpha_2 >0$ such that $\alpha_1 |x|^2 \le x^t Ax \le \alpha_2 |x|^2$ for all $x \in \re^N_+$ because $A$ is positive definite.
Then we have
\begin{align*}
	F_\la (\varphi_{\gamma, \ep}) 
	&\ge \la c \int_{x_N=0}^{\frac{\ep}{2}} \int_{|x'|\le \frac{\ep}{2}} x_N^{\gamma p-p} \exp \( -\frac{\alpha_2^{\frac{p}{2}}}{p} |x|^p \) dx' dx_N \\
	&+ \la c \int_{x_N=0}^{\frac{\ep}{2}} \int_{\frac{\ep}{2} \le |x'| \le 2\ep} x_N^{\gamma p-p} \xi_\ep (|x'|)^p \exp \( -\frac{\alpha_2^{\frac{p}{2}}}{p} |x|^p \) dx' dx_N,
\end{align*}
which yields that
\begin{align}
\label{deno}
	F_\la (\varphi_{\gamma, \ep}) 
	&\ge \la \frac{c \,|\, B^{N-1}(\frac{\ep}{2}) \,|}{\gamma p - p +1} \( \frac{\ep}{2} \)^{\gamma p -p +1} \exp \( -\frac{\alpha_2^{\frac{p}{2}}}{p} \( \frac{\ep}{\sqrt{2}}\)^p \) \notag \\
	&+ \la \frac{c \,C(\ep)}{\gamma p -p +1} \( \frac{\ep}{2} \)^{\gamma p - p +1} \exp \( -\frac{\alpha_2^{\frac{p}{2}}}{p} \( \frac{3}{2} \ep \)^p \).
\end{align}
Here $|B^{N-1}(r)| = \int_{|x'| \le r} dx'$ denotes the volume of the $(N-1)$-dimensional ball with radius $r$, and
\begin{align*}
	C(\ep) = \int_{\frac{\ep}{2} \le |x'| \le 2\ep} \xi_\ep (|x'|)^p dx'.
\end{align*}
On the other hand, we obtain
\begin{align*}
	\int_{\re^N_+} \left| \frac{\pd \varphi_{\gamma, \ep}}{\pd x_N} \right|^p d\mu_A 
	&\le \gamma^p c \int_{x_N=0}^{\frac{\ep}{2}} \int_{|x'|\le \frac{\ep}{2}} x_N^{\gamma p-p} \exp \( -\frac{\alpha_1^{\frac{p}{2}}}{p} |x|^p \) dx' dx_N \\
	&+ \gamma^p c \int_{x_N=0}^{\frac{\ep}{2}} \int_{\frac{\ep}{2} \le |x'| \le 2\ep} x_N^{\gamma p-p} \xi_\ep (|x'|)^p \exp \( -\frac{\alpha_1^{\frac{p}{2}}}{p} |x|^p \) dx' dx_N \\
	&+ c \int_{x_N=\frac{\ep}{2}}^{2\ep} \int_{|x'| \le 2\ep} \left| \frac{\pd}{\pd x_N} \( x_N^{\gamma} \xi_\ep (x_N) \) \right|^p \xi_\ep (|x'|)^p \exp \( -\frac{\alpha_1^{\frac{p}{2}}}{p} |x|^p \) dx' dx_N, 
\end{align*}
which yields that
\begin{align}
\label{derivative2}
	\int_{\re^N_+} \left| \frac{\pd \varphi_{\gamma, \ep}}{\pd x_N} \right|^p d\mu_A 
	&\le \gamma^p \frac{c \,|\, B^{N-1}(\frac{\ep}{2}) \,|}{\gamma p -p +1} \( \frac{\ep}{2} \)^{\gamma p -p +1}  \notag \\
	&+ \gamma^p \frac{c \,C(\ep)}{\gamma p -p +1} \( \frac{\ep}{2} \)^{\gamma p - p +1} \exp \( -\frac{\alpha_1^{\frac{p}{2}}}{p} \( \frac{\ep}{2}\)^p \) \notag \\
	&+ c 2^{p-1} \( \gamma^p \,D(\ep) + E(\ep) \) \exp \( -\frac{\alpha_1^{\frac{p}{2}}}{p} \( \frac{\ep}{2} \)^p \)
\end{align}
where
\begin{align*}
	&D(\ep) = \int_{x_N=\frac{\ep}{2}}^{2\ep} \int_{|x'| \le 2\ep} x_N^{\gamma p-p} \xi_\ep (x_N)^p \xi_\ep (|x'|)^p dx_N dx', \\ 
	&E(\ep) = \int_{x_N=\frac{\ep}{2}}^{2\ep} \int_{|x'| \le 2\ep} x_N^{\gamma p} \( \frac{2}{3\ep} \)^p \xi_\ep (|x'|)^p dx_N dx'. 
\end{align*}
Note that 
\begin{align}\label{order 1}
	D(\ep) &\le |B^{N-1}(2\ep)| \int_{x_N=\frac{\ep}{2}}^{2\ep} x_N^{\gamma p-p} \xi_\ep (x_N)^p dx_N \notag \\
	&\le C \ep^{N-1} \int_{t=\frac{\ep}{2}}^{2\ep} t^{\gamma p-p} dt \notag \\
	&= C \ep^{\gamma p -p +N}
\end{align}
and 
\begin{align}\label{order 2}
	E(\ep) \le |B^{N-1}(2\ep)| \int_{x_N=\frac{\ep}{2}}^{2\ep} x_N^{\gamma p} \( \frac{2}{3\ep} \)^p dx_N = C \ep^{\gamma p -p +N}
\end{align}
for some absolute value $C > 0$.
In the same way as above, we also obtain
\begin{align}
\label{A half}
	\left| \int_{\re^N_{+}} \dfrac{|\varphi_{\gamma, \ep}|^p (x^t Ax)^{\frac{p-2}{2}} (Ax)_N}{x_N^{p-1}} d\mu_A \right| 
	&\le \alpha_2^{\frac{p-2}{2}} \int_{x_N= 0}^{2\ep} \int_{|x'| \le 2\ep} x_N^{\gamma p-p +1} | A | |x|^{p-1} \rho_A(x) dx \notag \\
	&\le C \ep^{\gamma p + N}
\end{align}
for $\gamma > \frac{p-1}{p}$ sufficiently close to $\frac{p-1}{p}$.
From (\ref{derivative2}), (\ref{order 1}), (\ref{order 2}), and \eqref{A half}, we have
\begin{align}\label{nume}
	E(\varphi_{\gamma, \ep}) 
	&\le \gamma^p \frac{c \,|\, B^{N-1}(\frac{\ep}{2}) \,|}{\gamma p - p +1} \( \frac{\ep}{2} \)^{\gamma p -p +1} \notag \\
	&+ \gamma^p \frac{c \,C(\ep)}{\gamma p - p +1} \( \frac{\ep}{2} \)^{\gamma p -p +1} \exp \( -\frac{\alpha_1^{\frac{p}{2}}}{p} \( \frac{\ep}{2}\)^p \) + C
\end{align}
for an absolute value $C >0$ when $\gamma > \frac{p-1}{p}$ and $\ep \in (0,1)$.

We have chosen a small $\ep >0$ in advance such that
\begin{align*}
	\la \cdot \min \left\{  \exp \( -\frac{\alpha_2^{\frac{p}{2}}}{p} \( \frac{\ep}{\sqrt{2}} \)^p \), 
	\exp \( -\frac{\alpha_2^{\frac{p}{2}}}{p} \( \frac{3}{2} \ep \)^p + \frac{\alpha_1^{\frac{p}{2}}}{p} \( \frac{\ep}{2}\)^p \) \right\} > \( \frac{p-1}{p} \)^p,
\end{align*}
which is possible since $\la > \( \frac{p-1}{p} \)^p$.
For this choice of $\ep$, 
we may take $\gamma > \frac{p-1}{p}$ sufficiently close to realize that
\begin{equation}
\label{gamma choice}
	\begin{cases}
	&\la \exp \( -\frac{\alpha_2^{\frac{p}{2}}}{p} \( \frac{\ep}{\sqrt{2}} \)^p \) > \gamma^p, \\
	&\la \exp \( -\frac{\alpha_2^{\frac{p}{2}}}{p} \( \frac{3}{2} \ep \)^p \) > \gamma^p \exp \( -\frac{\alpha_1^{\frac{p}{2}}}{p} \( \frac{\ep}{2}\)^p \)
	\end{cases}
\end{equation}
holds true.
From the estimates (\ref{deno}) and (\ref{nume}), we observe that
\begin{align*}
	E(\varphi_{\gamma, \ep}) < F_\la (\varphi_{\gamma, \ep})
\end{align*}
for $\gamma > \frac{p-1}{p}$ sufficiently close to $\frac{p-1}{p}$ satisfying \eqref{gamma choice}.
Therefore the inequality (\ref{WH half}) never holds if the constant on the left-hand side of (\ref{WH half}) is larger than $(\frac{p-1}{p})^p$.
Hence the constant $(\frac{p-1}{p})^p$ in (\ref{WH half}) is optimal.
\end{proof}
\qed

%
%


%
%

\begin{acknowledgements}
Part of this work was supported by 
JSPS Grant-in-Aid for Fellows(DC2), No.16J07472 (M.S), 
and
JSPS Grant-in-Aid for Scientific Research (B), No.15H03631, 
\end{acknowledgements}

\end{document}